\newtheorem{theorem}{Theorem}
\newtheorem{lemma}{Lemma}
\newtheorem{proposition}{Proposition}
\newcommand{\oeis}[1]{\href{https://oeis.org/#1}{#1}}
\title{Faulhaber's formula, Bernoulli numbers, power sums of natural numbers and totatives and the functional equation $f(x)+x^k=f(x+1)$}
\author{Chai Wah Wu}
\affil{IBM Research\\IBM T. J. Watson Research Center, Yorktown Heights, NY, USA\thanks{cwwu@us.ibm.com}}
\date{March 1, 2024\\Latest update: June 29, 2026}
\begin{document}

\maketitle

\begin{abstract} In modern usage the Bernoulli numbers and Bernoulli polynomials follow Euler's approach and are defined using generating functions. Originally, they were derived by Bernoulli while characterizing Faulhaber's formula for the sum of consecutive powers. 
These equations
have many consequences and applications in various areas of mathematics. We consider yet another application by studying the functional equation $f(x)+x^k=f(x+1)$ and show that a solution of this equation can be derived from Faulhaber's formula. 
We then use these results to study the totatives of $n$, i.e. numbers less than $n$ that are coprime to $n$. In particular, we look at sums of powers of totatives of $n$ that are less than $\frac{n}{2}$.
We show that
the sum of powers of this half of the totatives can also be expressed in the same structural form as the sum of powers of all totatives
and provide explicit formulas for this sum. As an application of these results, we obtain a formula for the total area of all rectangles with coprime width and length and semiperimeter $n$. 
\end{abstract}

\section{Faulhaber's formula and Bernoulli numbers}
Faulhaber's formula (also known as Bernoulli's formula) \cite{Apostol1976} equates the sum of $k$-th powers of $n$ consecutive integers to a $(k+1)$-th degree polynomial of $n$. It was discovered by various people including Seki, Faulhaber, and Bernoulli.
The coefficients of this polynomial (after suitable scaling) are now known as Bernoulli numbers.
In particular, for $n$ and $k$ nonnegative integers, Faulhaber's formula is written as
\begin{equation}
\label{eqn:faulhaber} 
F(n,k) = \sum_{i=1}^n i^k = \frac{1}{k+1}\sum_{j=0}^k\binom{k+1}{j}B^+_j n^{k+1-j}
\end{equation}
where $B^+_j$ are known as the Bernoulli numbers of the second kind. 
There are in general two conventions (resulting in two kinds of numbers) when defining Bernoulli numbers, with the only difference being the sign of $B_1$. In particular, $B^+_1=\frac{1}{2}$, whereas
the Bernoulli numbers of the first kind $B^-_j$ are identical to $B^+_j$ except for $B^-_1 = -\frac{1}{2}$. The convention of using $B^-_j$ to denote Bernoulli numbers has been used in modern textbooks although recently there are discussions\footnote{\url{https://luschny.de/math/zeta/The-Bernoulli-Manifesto.html}, \url{https://www-cs-faculty.stanford.edu/~knuth/news22.html}} on whether $B^+_j$, which was used in the past, should be favored. Computer algebra systems (CAS) such as \verb+Mathematica+, \verb+Maple+, and \verb+PARI+ currently use $B^-_j$ to denote Bernoulli numbers (as of this writing). 
Until very recently, the \verb+bernoulli+  function in the Python-based CAS \verb+Sympy+ also returns $B^-_j$. 
Since version 1.12, however, this function returns $B^+_j$. Note that the number of terms in the left hand side of Eq. (\ref{eqn:faulhaber}) is $n$ while it is $k+1$ on the right hand side, meaning that the computation of $F(n,k)$ is more efficient using the right hand side when $n\gg k$.

Whereas the Bernoulli numbers originally arises from Bernoulli's study of Faulhaber's formula, they are defined in modern usage via the generating functions  \cite{wiki:Bernoulli_number}:
\begin{eqnarray}
\sum_{j=0}^\infty \frac{B^-_jx^j}{j!} = \frac{x}{e^x-1} \label{eqn:bernoulli1}\\
\sum_{j=0}^\infty \frac{B^+_jx^j}{j!} = \frac{x}{1-e^{-x}} \label{eqn:bernoulli2}
\end{eqnarray}

They also satisfy the following recursive definitions \cite{wiki:Bernoulli_number}:
\begin{eqnarray}
\sum_{j=0}^m\binom{m+1}{j}B^-_j = \delta_{m,0} \label{eqn:bernoulli_sum_-}\\
\sum_{j=0}^m\binom{m+1}{j}B^+_j = m+1 \label{eqn:bernoulli_sum_+}
\end{eqnarray}
where $\delta_{m,0}$ is the Kronecker delta function defined as $\delta_{m,0}=1$ if $m=0$ and $\delta_{m,0}=0$ otherwise.
A related important concept is the set of Bernoulli polynomials $\hat{B}_k$ defined as:
$$\hat{B}_k(x) = \sum_{j=0}^{k} \binom{k}{j}B_j^- x^{k-j}$$
Comprehensive introductions to the Bernoulli numbers and Bernoulli polynomials including their many properties can be found in \cite{Abramowitz1964,Apostol1976,Hardy2008}.

\section{Variations of Faulhaber's formula}
Faulhaber's formula as described in Eq. (\ref{eqn:faulhaber}) expresses the sum of powers $F(n,k)$ using Bernoulli numbers of the second kind $B^+_j$. The following formula gives a variant of Faulhaber's formula using Bernoulli numbers of the first kind $B^-_j$ \cite{larson:bernoulli:2019}.
\begin{theorem}
\label{thm:faulhaber-new}
If $k>0$, then
\begin{equation}
\label{eqn:faulhaber-new} F(n,k) = \sum_{i=1}^n i^k = \frac{1}{k+1}\sum_{j=0}^k\binom{k+1}{j}B^-_j (n+1)^{k+1-j}\end{equation}
\end{theorem}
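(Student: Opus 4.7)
The plan is to derive the $B^-$ variant directly from the original Faulhaber formula (\ref{eqn:faulhaber}) by exploiting the single-index difference between the two Bernoulli conventions. Specifically, I would apply (\ref{eqn:faulhaber}) with $n$ replaced by $n+1$ to obtain
\begin{equation*}
F(n+1,p) = \frac{1}{p+1}\sum_{j=0}^p\binom{p+1}{j}B^+_j(n+1)^{p+1-j},
\end{equation*}
and then use the trivial telescoping identity $F(n,p) = F(n+1,p) - (n+1)^p$ to rewrite the left-hand side in terms of $F(n,p)$.

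The key observation is that $B^+_j = B^-_j$ for every $j\neq 1$ while $B^+_1 - B^-_1 = 1$. Hence replacing $B^+_1$ by $B^-_1$ in the sum decreases it by exactly $\frac{1}{p+1}\binom{p+1}{1}(n+1)^{p}=(n+1)^p$, which is precisely the quantity we are subtracting. Thus the substitution and the correction term cancel, and what remains is the claimed expression with $B^-_j$. I expect this to be essentially a one-line bookkeeping argument once the telescoping step is in place.

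The only subtle point, and the reason for the hypothesis $p>0$, is that the $j=1$ term must actually appear in the summation for the trick to absorb the $(n+1)^p$ correction into a change of Bernoulli convention. When $p=0$ the sum only has the $j=0$ term, and indeed the proposed formula would give $n+1$ rather than the correct value $n$, confirming that the restriction $p>0$ is necessary and that no additional obstacle arises in the argument.
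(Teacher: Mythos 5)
Your argument is correct and is essentially the same as the paper's: both apply Faulhaber's formula at $n+1$, use $F(n+1,p)=F(n,p)+(n+1)^p$, and absorb the subtracted $(n+1)^p$ into the $j=1$ term by switching $B^+_1$ to $B^-_1$. Your explicit remark on why $p>0$ is needed (the $j=1$ term must be present, and $p=0$ genuinely fails) is a nice addition the paper leaves implicit.
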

\begin{proof}
By Eq. (\ref{eqn:faulhaber}),
$$ F(n+1,k) = F(n,k)+(n+1)^k = \frac{1}{k+1}\sum_{j=0}^k\binom{k+1}{j}B^+_j (n+1)^{k+1-j}$$
The two kinds of Bernoulli numbers differ only at $j=1$, and we note that the term in the summation above corresponding to $j=1$ is $\frac{1}{2}(n+1)^k$ whereas $\frac{1}{k+1}\binom{k+1}{1}B^-_1(n+1)^k = -\frac{1}{2}(n+1)^k$.
This means that 
$$F(n+1,k) =   (n+1)^k+ \frac{1}{k+1}\sum_{j=0}^k\binom{k+1}{j}B^-_j (n+1)^{k+1-j}$$ and the conclusion follows.
\end{proof}
Note that the formula in Eq. (\ref{eqn:faulhaber-new}) is almost identical to the formula in Eq. (\ref{eqn:faulhaber}) except for using $B^-_j$ rather than $B^+_j$ and shifting the base of the powers from $n$ to $n+1$. A trivial extension is the following generalization which we will use later:

\begin{theorem}
\label{thm:faulhaber-new-2}
Let $h$ be a function on the integers.
If $k>0$ and $h(n)$ is an integer, then
\begin{eqnarray}
\label{eqn:faulhaber-new-2} F(h(n),k) = \sum_{i=1}^{h(n)} i^k &=& \frac{1}{k+1}\sum_{j=0}^k\binom{k+1}{j}B^+_j h(n)^{k+1-j}\\
&=& \frac{1}{k+1}\sum_{j=0}^k\binom{k+1}{j}B^-_j (h(n)+1)^{k+1-j}
\end{eqnarray}
\end{theorem}

The Bernoulli numbers are rational numbers and the first few values are given by $1$, $\pm\frac{1}{2}$, $\frac{1}{6}$, $0$,
$-\frac{1}{30}$,$\dots$
There is a bit of irregularity in having two kinds of Bernoulli numbers which differs only at odd index $j=1$, especially since they are zero at all other odd indices $j$. It is possible to ignore $B_1$ in many of the equations concerning $B_j$, or equivalently by considering $\tilde{B}_j = \frac{B_j^+ + B_j^-}{2}$ instead. For instance, 
a consequence of Theorem \ref{thm:faulhaber-new} is the following variation of Faulhaber's formula that is satisfied by both kinds of Bernoulli numbers:
$$ F(n-1,k)+F(n,k) = \frac{2}{k+1}\sum_{j=0, j\neq 1}^k\binom{k+1}{j}B^\pm_j n^{k+1-j}$$
where $B^{\pm}_j$ means that Bernoulli numbers of either kind can be used. This is true in the formula above as $B^{\pm}_1$ is not used.
Similarly, 
Eqns. (\ref{eqn:bernoulli1})-(\ref{eqn:bernoulli2}) can be combined into
$$\sum_{j=0,j\neq 1}^\infty \frac{B^\pm_jx^j}{j!} = \frac{x}{2}\coth\left(\frac{x}{2}\right)$$
or equivalently
$$\sum_{j=0,j\neq 1}^\infty \frac{2^jB^\pm_jx^j}{j!} = x\coth\left(x\right)$$
and for $m>0$ the $2$ equations 
Eqns. (\ref{eqn:bernoulli_sum_-})-(\ref{eqn:bernoulli_sum_+}) 
can be combined into
$$ 
\sum_{j=0,j\neq 1}^m\binom{m+1}{j}B^\pm_j = \frac{m+1}{2}.$$

We will show in a later section another formula involving Bernoulli numbers where either kind can be used. 

\section{A solution to the equation $f(x)+x^k=f(x+1)$}
Consider the functional equation 
\begin{equation}\label{eqn:feq} f(x)+x^k=f(x+1)
\end{equation}
and define $f_k(x) = \frac{1}{k+1}\sum_{j=0}^k\binom{k+1}{j}B^-_j x^{k+1-j}$.  The above discussion shows that $f_k$ satisfies Eq. (\ref{eqn:feq}), i.e., $f_k(x+1) = f_k(x)+x^k$. Note that $(k+1)f_k(x) = \hat{B}_{k+1}(x)-B_{k+1}^-$. 
This implies the following well-known property of Bernoulli polynomials: $\hat{B}_k(x+1)-\hat{B}_k(x) = kx^{k-1}$ (see e.g. \cite{larson:bernoulli:2019,Ireland1982}), and leads to the following form of Faulhaber's formula: $F(n,k) = \frac{1}{k+1}(\hat{B}_{k+1}(n+1)-\hat{B}_{k+1}(0))$.
We show that the converse is also true in the following sense.

Suppose that $p$ is a minimal degree polynomial that satisfied Eq. (\ref{eqn:feq}). Clearly $p$ must have a degree larger than or equal to $k$.
If $p$ has degree $k$, then the coefficient of $x^k$ does not match in Eq. (\ref{eqn:feq}). Therefore, $p$ must be of degree $k+1$ or larger.
Consider the case where $p$ has degree $k+1$ written as $p(x) = \sum_{j=0}^{k+1}a_ix^j$.
Then Eq. (\ref{eqn:feq}) can be written as
$\sum_{j=0}^{k+1}a_j x^j+x^k = \sum_{j=0}^{k+1}a_j (x+1)^j$.
Matching the coefficients results in the following $k+2$ equations
$a_j = \sum_{l=0}^{k+1}a_l\binom{l}{j}= \sum_{l=j}^{k+1}a_l\binom{l}{j}$ for $j\neq k$
and $a_k+1 =   \sum_{l=0}^{k+1}a_l\binom{l}{k}$. This last equation can be simplified as:
$a_k+1=a_k+a_{k+1}(k+1)$, i.e. $a_{k+1} = \frac{1}{k+1}$.
The equation for $j=k+1$ is the identity $a_{k+1} =a_{k+1}$.
The remaining $k$ equations $a_j = \sum_{l=j}^{k+1}a_l\binom{l}{j}$ for $j<k$ uniquely determine the other coefficients $a_j$. In particular, $a_j = a_j+a_{j+1}\binom{j+1}{j} + \sum_{l=j+2}^{k+1}a_{l}\binom{l}{j}$,
i.e. for $j\leq k$,
$$a_{j} = -\frac{\sum_{l=j+1}^{k+1}a_{l}\binom{l}{j-1}}{\binom{j}{j-1}}= -\frac{\sum_{l=j+1}^{k+1}a_{l}\binom{l}{j-1}}{j}$$
This implies that
$a_k=-a_{k+1}\binom{k+1}{k-1}/k= -\frac{1}{2}$. 

Let $b_{k+1-j} = (k+1)a_j/\binom{k+1}{j}$ or equivalently
$b_{j} = (k+1)a_{k+1-j}/\binom{k+1}{j}$. This implies that $b_0 = 1$ and $b_1 = -\frac{1}{2}$.
After straightforward algebraic manipulations we get the following identities regarding 
$b_j$'s:
$b_j = -\frac{\sum_{w=0}^{j-1} b_w \binom{j+1}{w}}{j+1}$ for $0<j\leq k+1$ which is exactly the recursive definition of the Benoulli number $B^-_j$ in 
Eq. (\ref{eqn:bernoulli_sum_-}).
Thus we have shown that 
\begin{proposition} \label{prop:minimal1}
The first $k+2$ Bernoulli numbers $B^-_j$ are defined by the coefficients $a_j$ of the minimal degree polynomial that satisfies the functional equation $f(x)+x^k=f(x+1)$ via the equation $B^-_{j} = (k+1)a_{k+1-j}/\binom{k+1}{j}$.
\end{proposition}
Define $g(x) = \frac{1}{k+1}\sum_{j=0}^k\binom{k+1}{j}B^+_j x^{k+1-j}$, a $(k+1)^{th}$-degree polynomial with rational coefficients. Then Eqns (\ref{eqn:faulhaber}),(\ref{eqn:faulhaber-new}) show that
$g(x)+(x+1)^k =g(x+1)$. 
An analogous derivation shows that
\begin{proposition} \label{prop:minimal2}
The first $k+2$ Bernoulli numbers $B^+_j$ are defined by the coefficients $a_j$ of the minimal degree polynomial that satisfies the functional equation $f(x)+(x+1)^k=f(x+1)$ via the equation  $B^+_{j} = (k+1)a_{k+1-j}/\binom{k+1}{j}$.
\end{proposition}

Since the Bernoulli polynomials $\hat{B}_k(x)$ form an Appell sequence \cite{wiki:Appell_sequence}, i.e., they satisfy the differential equation $\frac{d}{dx}\hat{B}_k(x) = k\hat{B}_{k-1}(x)$, and it is clear that
$f_k$ is an Appell sequence as well. This implies that it satisfies Appell's identity: $f_k(x+y) = \sum_{i=0}^k\binom{k}{i}f_i(x)y^{k-i}$, i.e.  $f_k(x)+x^k = f_k(x+1) = \sum_{i=0}^k \binom{k}{i}f_{i}(x)$. This simplifies to the equation
$\sum_{i=0}^{k-1}\binom{k}{i}f_i(x) = x^k$.
This equation can also be derived from the well-known identity $kx^{k-1} =\sum_{i=1}^k\binom{k}{i}\hat{B}_{k-i}(x)$.

\section{Sums of powers of totatives}\label{sec:totatives}
Faulhaber's formula allows the sum of $k$-th powers of the first $n$ positive integers to be written as a polynomial of degree $k+1$. In this section we look at an application of Faulhaber's formula to find the sum of $k$-th powers of the totatives of $n$ \cite{Dickson1919,Polya1925,Brown2005}.

We will use the following standard notations from number theory. Let $J_k(n) = n^k\prod_{p|n, p\text{ prime}}\left(1-\frac{1}{p^k}\right)$ be the $k$-th degree Jordan totient of $n$, $\text{rad}(n)=\prod_{p|n,p\text{ prime}}p$ be the radical or squarefree kernel of $n$, $\nu(n)$ be the number of distinct prime factors of $n$, and $J^{-1}_k$ be the Dirichlet inverse of $J_k$ defined recursively as $J^{-1}_k(n) = -\sum_{d|n,d<n}J_k(\frac{n}{d})J^{-1}_k(d)$. Depending on context, $\gcd(a,b)$ is simplified with the shorthand $(a,b)$. Let $\phi(n)$ denote the number of totatives of $n$ (also known as Euler's phi function or as the totient function) and $\mu(n)$ denote the M\"{o}bius function, i.e. $\mu(1)=1$, $\mu(n) = 0$ if $n$ is not squarefree and $\mu(n)=(-1)^{\nu(n)}$ otherwise.
Let $\psi_k(n) = \prod_{p|n, p \text{ prime}} 1-p^k$ with $\psi_k(1)=1$. 
It is easy to show that $\psi_0(n) = 0$ for $n>1$, $\phi(n) = n\psi_{-1}(n)$ and $\psi_k(n) = J^{-1}_k(n) = \frac{(-1)^{\nu(n)}J_k(n)(\text{rad}(n))^k}{n^k}$.
Since $d^k$ is multiplicative in $d$, it is straightfoward \cite{Brown2005} to show that 
\begin{equation}\label{eqn:psi}\psi_k(n) = \sum_{d|n}\mu(d)d^k
\end{equation} 
Let $S_k(n) = \sum_{(d,n)=1} d^k$ denote the sum of the $k$-th powers of the totatives of $n$.
In \cite{Brown2005}, the following formula for $S_k$ was shown by means of the M\"{o}bius inversion formula \cite{Apostol1976}:
$$ S_k(n) = n^k\sum_{d|n}\frac{1}{d^k}\mu\left(\frac{n}{d}\right)\sum_{i=1}^d i^k$$
When $k$ is smaller than $d$, it is useful to replace the sum $F(d,k) = \sum_{i=1}^d i^k$ with a polynomial of $d$ with $k+1$ terms by means of Faulhaber's formula.
Furthermore, by using Eq. (\ref{eqn:psi}) and swapping the summation, the function $S_k(n)$ for $n>1$ can be written as:
$$ S_k(n) = \frac{n^k\phi(n) + \sum_{j=2}^{k} n^{k+1-j}\binom{k+1}{j}B_j^+\psi_{j-1}(n)}{k+1}$$
Since $B_{1}^+$ is not used in the above formula, either kind of Bernoulli numbers ($B_i^+$ or $B_i^-$) could have been used.
This can be more compactly written as:
\begin{equation}\label{eqn:sum1a}
S_k(n) = \sum_{-1\leq i< k, i\text{ odd}} a_{i}^kn^{k-i}\psi_{i}(n)
\end{equation}
with
$a_{i}^k = \frac{\binom{k+1}{i+1}B^\pm_{i+1}}{k+1}$.

Note that $a_i^k$ are exactly the coefficients of the minimal polynomials in either Proposition \ref{prop:minimal1} or Proposition \ref{prop:minimal2} and that $(k+1)a_i^k$ are the coefficients of the Bernoulli polynomial $\hat{B}_{k+1}$.

For the first few values of $k$, $S_k$ has the following forms \cite{Brown2005} for $n>1$:
$S_1(n) = \frac{n\phi(n)}{2}$,
$S_2(n) = \frac{n^2}{3}\phi(n)+\frac{n}{6}\psi_1(n)$, 
$S_3(n) = \frac{n^3}{4}\phi(n)+\frac{n}{4}\psi_1(n)$, and
$S_4(n) = \frac{n^4}{5}\phi(n)+\frac{n^3}{3}\psi_1(n)-\frac{n}{30}\psi_3(n)$ and these can be found in the Online Encyclopedia of Integer Sequences (OEIS) \cite{oeis} as sequences \oeis{A023896}, \oeis{A053818}, \oeis{A053819}, \oeis{A053820}, respectively.

\section{Sum of powers of the first half of totatives}\label{sec:totativesR}
Since $\gcd(x,n)=1$ if and only if $\gcd(n-x,n)=1$, the set of totatives are naturally partitioned into pairs $\{x,n-x\}$ and thus it makes sense to consider $R_n=\{1\leq d\leq \frac{n}{2}|(d,n)=1\}$, the set of totatives that are less than or equal to $\frac{n}{2}$\footnote{Note that for even $n>2$, $n/2$ is not a totative since $\gcd(n,n/2) = n/2$ and thus $R_n=\{1\leq d< \frac{n}{2}|(d,n)=1\}$ and $|R_n| = \phi(n)/2$ for $n>2$.}. Since for $n>2$, there are $\phi(n)/2$ such pairs and each pair sums to $n$, this can be used to show that $S_1(n) = n\phi(n)/2$.
Let us define $\tilde{S}_k(n) = \sum_{R_n} d^k$.

Baum \cite{Baum1982} provided formulas for $\tilde{S}_1(n)$ and $\tilde{S}_2(n)$. His method can be extended to $\tilde{S}_k(n)$ for general values of $k$ and in this section we provide formulas for $\tilde{S}_k$ for all $k$ and illustrate structural features of $\tilde{S}_k$. In particular we show that just like $S_k$,  $\tilde{S}_k$ can also be written as linear combinations of Dirichlet inverses of Jordan totients of odd degrees.

Let $\alpha_k(n) = \frac{1}{n^k}\sum_{i=1}^{\lfloor\frac{n-1}{2}\rfloor} i^k$.
As we will only use $\alpha_k$ when $n$ is odd, let us assume that $n$ is odd in all the discussion about $\alpha_k$. In this case 
$\lfloor\frac{n-1}{2}\rfloor = \frac{n-1}{2}$.
By Faulhaber's formula $\alpha_k$ can be written as:
\begin{eqnarray}
 \alpha_k(n) & = &\frac{1}{(k+1)n^k}\sum_{j=0}^k\binom{k+1}{j}B_j^+ \frac{(n-1)^{k+1-j}}{2^{k+1-j}}\\
 &=& \frac{1}{k+1}\sum_{j=0}^k\binom{k+1}{j}\frac{B_j^+}{2^{k+1-j}} \sum_{i=0}^{k+1-j}(-1)^i\binom{k+1-j}{i}n^{1-j-i} \label{eqn:alpha_k_expand}
 \end{eqnarray}
 
By Theorem \ref{thm:faulhaber-new-2} this can also be written as:
$$\alpha_k(n) = \frac{1}{(k+1)n^k}\sum_{j=0}^k\binom{k+1}{j}B_j^- \frac{(n+1)^{k+1-j}}{2^{k+1-j}}$$
 
The following result shows that
$\alpha_k(n)$ can be written as a polynomial with powers $n^j$ where $j$ is odd and ranges from $-k$ to $1$.  This implies that $\alpha_k(n/d)$ can be written as a polynomial of $d$
with powers $d^j$ where $j$ is odd and ranges from $-1$ to $k$. 

\begin{lemma} \label{lem:alpha_odd}
If $k$ is even, then $\alpha_k(n) = n^{1-k}(n^2-1)P_k(n^2)$ for some polynomial $P_k$ with rational coefficients.
If $k$ is odd, then $\alpha_k(n) = n^{-k}P_k(n^2)$ for some polynomial $P_k$ with rational coefficients. In other words,
all nonzero coefficients of $\alpha_k(n)$ as a polynomial of $n$ occur at odd powers.
\end{lemma}

\begin{proof}
Using Faulhaber's formula in terms of Bernoulli polynomials $\hat{B}_i(n)$,
$\alpha_k(n)$ can be written as 
$$\alpha_k(n) = \frac{1}{n^k(k+1)}\left(\hat{B}_{k+1}\left(\frac{n+1}{2}\right)-\hat{B}_{k+1}(0)\right)$$.

We make use of the following symmetry property of Bernoulli polynomials: $\hat{B}_k(1-x) = (-1)^k\hat{B}_k(x)$ \cite{Abramowitz1964}, i.e.
if $k$ is even, then $\hat{B}_{k+1}(x)$ is an odd function around $x=\frac{1}{2}$ and an even function around $x=\frac{1}{2}$ otherwise.
This means that if $k$ is even, then $\hat{B}_{k+1}(x-\frac{1}{2})$ is an odd function of $x$, i.e $\hat{B}_{k+1}(x-\frac{1}{2}) = xQ_{k+1}(x^2)$ for some polynomial $Q_{k+1}$.
If $k>0$ is even, then $\hat{B}_{k+1}(0) = \hat{B}_{k+1}(1) = 0$ and thus $(x-\frac{1}{2})(x+\frac{1}{2})$ is a divisor of $\hat{B}_{k+1}$. 
Similarly if $k$ is odd, then $\hat{B}_{k+1}(x-\frac{1}{2}) = Q_{k+1}(x^2)$ for some polynomial $Q_{k+1}$. The conclusions follow after substituting $x=\frac{n}{2}$.
\end{proof}

By grouping the terms in Eq. (\ref{eqn:alpha_k_expand}), we can write $\alpha_k(n)$ as $$\alpha_k(n) = \sum_{-k\leq i\leq 1, i \text{ odd}} b_i^k n^i$$
where the rational coefficients $b_{i}^k$ are given by:

\begin{eqnarray}
b_i^k &=& \frac{1}{(k+1)2^{k+1}}\sum_{j=0}^{\min(k,1-i)}(-1)^{1-j-i} 2^j B_{j}^{+}\binom{k+1}{j}\binom{k+1-j}{1-j-i} \label{eqn:bik}
\end{eqnarray}
or in terms of trinomial coefficients:
\begin{eqnarray}
b_i^k &=& \frac{1}{(k+1)2^{k+1}}\sum_{j=0}^{\min(k,1-i)}(-1)^{1-j-i} 2^j B_{j}^{+}\binom{k+1}{j,\, (1-j-i),\, (k+i)}
\end{eqnarray}

For instance,
$\alpha_k(n)$ for the first few values of $k$ are given by:

\begin{itemize}
\item $\alpha_1(n) =  \frac{n - n^{-1}}{8} $.
\item $\alpha_2(n) =  \frac{n - n^{-1}}{24} $.
\item $\alpha_3(n) =  \frac{n - 2 n^{-1} + n^{-3}}{64} $.
\item $\alpha_4(n) =  \frac{3 n - 10 n^{-1} + 7 n^{-3}}{480} $.
\item $\alpha_5(n) =  \frac{n - 5 n^{-1} + 7 n^{-3} - 3 n^{-5}}{384} $.
\item $\alpha_6(n) =  \frac{3 n - 21 n^{-1}+ 49 n^{-3} - 31 n^{-5}}{2688 } $.
\item $\alpha_7(n) =  \frac{3 n - 28 n^{-1} + 98 n^{-3} - 124 n^{-5} + 51 n^{-7}}{6144 } $.
\item $\alpha_8(n) =  \frac{5 n - 60 n^{-1} + 294 n^{-3} - 620 n^{-5} + 381 n^{-7}}{23040 } $.
\item $\alpha_9(n) =  \frac{n - 15 n^{-1} + 98 n^{-3} - 310 n^{-5} + 381 n^{-7} - 155 n^{-9}}{10240 } $.
\end{itemize}

The following result follows directly from the definitions of $\phi$ and $\psi_k$.
\begin{lemma} \label{lem:mod4}
If $n\equiv 0\pmod{4}$, then $\phi(n) = 2\phi(\frac{n}{2})$ and $\psi_k(n)=\psi_k(\frac{n}{2})$. 
If $n\equiv 2\pmod{4}$, then $\phi(n) = \phi(\frac{n}{2})$ and $\psi_k(n)=(1-2^k)\psi_k(\frac{n}{2})$.
\end{lemma}
\begin{proof}
If $n\equiv 0\pmod{4}$, then $n$ and $\frac{n}{2}$ are both even and thus $n$ and $\frac{n}{2}$ have the same distinct prime factors and this implies that $\psi_k(n)=\psi_k(\frac{n}{2})$.
If $n\equiv 2\pmod{4}$, then the distinct prime factors of $n$ is $2$ plus the distinct prime factors of $\frac{n}{2}$ and thus $\psi_k(n)=(1-2^k)\psi_k(\frac{n}{2})$.
Since $\phi(n) = n\psi_{-1}(n)$, the results for $\phi$ follow from the results for $\psi_{-1}$.
\end{proof}

The following Lemma follows from similar arguments as in \cite{Baum1982}.
\begin{lemma} \label{lem:tildeS}
If $n=2$ or $n\equiv 0 \pmod{4}$, then $\tilde{S}_k(n) = S_k(\frac{n}{2})$.
If $n$ is odd, 
then $\tilde{S}_k(n) = n^k\sum_{d|n}\mu(d) \alpha_k(\frac{n}{d})$.
If $n\neq 2$ and $n\equiv 2 \pmod{4}$, then $\tilde{S}_k(n) = S_k(\frac{n}{2})-2^k\tilde{S}_k(\frac{n}{2}) = S_k(\frac{n}{2}) - n^k\sum_{d|\frac{n}{2}}\mu(d) \alpha_k(\frac{n}{2d})$.
\end{lemma}
\begin{proof}
Since $\tilde{S}_k(2) = S_k(1) = 1$, $\tilde{S}_k(n) = S_k(\frac{n}{2})$ for $n=2$.
If $n\equiv 0 \pmod{4}$, then $n$ and $\frac{n}{2}$ have the same set of prime divisors and thus $\gcd(n,x) = 1$ if and only if $\gcd(\frac{n}{2},x) = 1$
and thus $\tilde{S}_k(n) = S_k(\frac{n}{2})$.
If $n$ is odd, let $N_d^k = \{x^k|1\leq x< \frac{n}{2}, (x,n) = d\}$.
Then $\alpha_k(n) = \frac{1}{n^k}\sum_{d|n}\sum_{x\in N_d^k}x$.
If $x\in N_d^k$ then $\gcd(x,n) = d$ and $\gcd(\frac{x}{d},\frac{n}{d}) = 1$.
Since $1\leq \frac{x}{d}<\frac{n}{2d}$, $\left(\frac{x}{d}\right)^k$ is a term in the sum in $\tilde{S}_k(\frac{n}{d})$ and
$x^k$ is a term in $d^k\tilde{S}_k(\frac{n}{d})$.
The converse is also true, and thus $\sum_{x\in N_d^k}x = d^k\tilde{S}_k(\frac{n}{d}) = \left(\frac{n}{d^*}\right)^k\tilde{S}_k(d^*)$ where
$d^*=\frac{n}{d}$.
Thus $\alpha_k(n) = \sum_{d|n}\frac{\tilde{S}_k(d)}{d^k} $. Applying the M\"{o}bius inversion formula, we get 
$\frac{\tilde{S}_k(n)}{n^k} = \sum_{d|n} \mu(d) \alpha_k(\frac{n}{d})$.
If $n\equiv 2\pmod{4}$, then let $m = \frac{n}{2}$ and it follows that $\gcd(x,n) = 1$ if and only if $\gcd(x,m)=1$ and $x$ is odd. Thus we have
\begin{eqnarray}
\tilde{S}_k(n) &=& \sum_{1\leq x\leq m, (x,m) = 1, x \text{ odd}}x^k\\
&=& \sum_{1\leq x\leq m, (x,m) = 1}x^k - \sum_{1\leq 2x\leq m, (x,m) = 1}(2x)^k \\
&=& S_k(m)-2^k\tilde{S}_k(m)
\end{eqnarray}
\end{proof}

\begin{theorem}\label{thm:main}
If $n\equiv 0\pmod{4}$, then
$$ \tilde{S}_k(n) = \frac{n^k\phi(n)}{2^{k+1}(k+1)} + \sum_{j=2}^{k}\frac{ n^{k+1-j}\binom{k+1}{j}B_j^+\psi_{j-1}(n)}{2^{k+1-j}(k+1)}$$
If $n$ is odd, 
 $$\tilde{S}_k(n) = \sum_{-1\leq i\leq k, i\text{ odd}}b_{-i}^kn^{k-i}\psi_i(n)$$
If $n\neq 2$ and $n\equiv 2\pmod{4}$, then
$$\tilde{S}_k(n) = \frac{n^k\phi(n)}{2^k(k+1)} + \sum_{j=2}^{k}\frac{n^{k+1-j}\binom{k+1}{j}B_j^+\psi_{j-1}(n)}{2^{k+1-j}(k+1)(1-2^{j-1})}-\sum_{-1\leq i\leq k, i\text{ odd}}\frac{b_{-i}^kn^{k-i}2^{i}\psi_i(n)}{1-2^i}$$
where the coefficients $b_i^k$ are given by Eq. (\ref{eqn:bik}).
\end{theorem}

\begin{proof}
If $n\equiv 0\pmod{4}$, then the result follows from Lemma \ref{lem:tildeS} and Eq. (\ref{eqn:sum1a}).
If $n$ is odd, then Lemma \ref{lem:tildeS} implies that
\begin{eqnarray}\tilde{S}_k(n) &=& n^k\sum_{d|n}\mu(d)\sum_{-k\leq i\leq 1, i\text{ odd}} b_i^k(n/d)^i \\
&=& \sum_{-1\leq i\leq k, i\text{ odd}}\sum_{d|n} \mu(d)b_{-i}^k\frac{d^i}{ n^i} \\
&=& \sum_{-1\leq i\leq k, i\text{ odd}}b_{-i}^kn^{k-i}\psi_i(n)
\end{eqnarray}
If $n\neq 2$ and $n\equiv 2\pmod{4}$, then $\frac{n}{2}$ is odd and  
\begin{eqnarray}\tilde{S}_k(n/2) &=& \sum_{-1\leq i\leq k, i\text{ odd}}b_{-i}^kn^{k-i}2^{i-k}\psi_i(n/2)\\
& =& \sum_{-1\leq i\leq k, i\text{ odd}}b_{-i}^kn^{k-i}2^{i}\psi_i(n)/(2^k(1-2^k))
\end{eqnarray}
and the result follows from Lemma \ref{lem:tildeS}.
\end{proof}

Recall that $S_k(n)$ can be written as Eq. (\ref{eqn:sum1a}).
We have shown that $\tilde{S}_k(n)$ can also be written in a similar form as 
\begin{equation}\label{eqn:sum2a}
\tilde{S}_k(n) = \sum_{-1\leq i\leq k, i\text{ odd}} \tilde{a}_{i}^{k} n^{k-i}\psi_{i}(n)
\end{equation}
where the rational coefficients $\tilde{a}_i^k$ for $i\geq 1$ depend on both $k$ and the residue of $n$ modulo $4$ and $\tilde{a}_{-1}^k$ depends only on $k$ and does not depend on $n$. Note however that there is a subtle difference between the structural forms of Eq. (\ref{eqn:sum1a}) and Eq. (\ref{eqn:sum2a}). In Eq. (\ref{eqn:sum1a}) the index $i$ is strictly less than $k$, whereas the index $i$ in Eq. (\ref{eqn:sum2a}) is less than or equal to $k$. Thus for odd $k$, Eq. (\ref{eqn:sum2a}) has one more term than Eq. (\ref{eqn:sum1a}).
This can be seen as $S_1(n)$ has only the term $\psi_{-1}$, whereas $\tilde{S}_1(n)$ has both a term $\psi_{-1}$ and a term $\psi_1$.
Similarly $S_3(n)$ has only the terms $\psi_{-1}$ and $\psi_{1}$, whereas $\tilde{S}_3(n)$ has terms $\psi_{-1}$, $\psi_{1}$ and $\psi_{3}$ (see next Section).

\begin{lemma}
As $n\rightarrow\infty$, $\frac{S_k(n)}{\tilde{S}_k(n)}\rightarrow 2^{k+1}$.
\end{lemma}
\begin{proof}
This follows from the fact that the leading term of $S_k(n)$ is $\frac{n^k\phi(n)}{k+1}$ and the leading term of $\tilde{S}_k(n)$ is $\frac{n^k\phi(n)}{2^{k+1}(k+1)}$.
\end{proof}

\section{Explicit formulas of $\tilde{S}_k(n)$} \label{sec:tildeSk}
For small values of $k$, the equations in Theorem \ref{thm:main} give the following explicit formulas for $\tilde{S}_k(n)$ with $n\neq 2$:
\begin{equation*}
\tilde{S}_1(n) = \left\{
 \begin{array}{ll}
\frac{n \phi(n)}{8} & \text{ if }n \equiv 0 \pmod{4} \\
\frac{n \phi(n) - \psi_{1}(n)}{8} & \text{ if }n \equiv \pm 1 \pmod{4} \\
\frac{n \phi(n) - 2 \psi_{1}(n)}{8} & \text{ if }n \equiv 2 \pmod{4}
\end{array}
\right.
\end{equation*}

\begin{equation*}
\tilde{S}_2(n) = \left\{
 \begin{array}{ll}
\frac{n \left(n \phi(n) + 2 \psi_{1}(n)\right)}{24} & \text{ if }n \equiv 0 \pmod{4} \\
\frac{n \left(n \phi(n) - \psi_{1}(n)\right)}{24} & \text{ if }n \equiv \pm 1 \pmod{4} \\
\frac{n \left(n \phi(n) - 4 \psi_{1}(n)\right)}{24} & \text{ if }n \equiv 2 \pmod{4}
\end{array}
\right.
\end{equation*}

\begin{equation*}
\tilde{S}_3(n) = \left\{
 \begin{array}{ll}
\frac{n^{2} \left(n \phi(n) + 4 \psi_{1}(n)\right)}{64} & \text{ if }n \equiv 0 \pmod{4} \\
\frac{n^{3} \phi(n) - 2 n^{2} \psi_{1}(n) + \psi_{3}(n)}{64} & \text{ if }n \equiv \pm 1 \pmod{4} \\
\frac{7 n^{3} \phi(n) - 56 n^{2} \psi_{1}(n) + 8 \psi_{3}(n)}{448} & \text{ if }n \equiv 2 \pmod{4}
\end{array}
\right.
\end{equation*}

\begin{equation*}
\tilde{S}_4(n) = \left\{
 \begin{array}{ll}
\frac{n \left(3 n^{3} \phi(n) + 20 n^{2} \psi_{1}(n) - 8 \psi_{3}(n)\right)}{480} & \text{ if }n \equiv 0 \pmod{4} \\
\frac{n \left(3 n^{3} \phi(n) - 10 n^{2} \psi_{1}(n) + 7 \psi_{3}(n)\right)}{480} & \text{ if }n \equiv \pm 1 \pmod{4} \\
\frac{n \left(21 n^{3} \phi(n) - 280 n^{2} \psi_{1}(n) + 64 \psi_{3}(n)\right)}{3360} & \text{ if }n \equiv 2 \pmod{4}
\end{array}
\right.
\end{equation*}

\begin{equation*}
\tilde{S}_5(n) = \left\{
 \begin{array}{ll}
\frac{n^{2} \left(n^{3} \phi(n) + 10 n^{2} \psi_{1}(n) - 8 \psi_{3}(n)\right)}{384} & \text{ if }n \equiv 0 \pmod{4} \\
\frac{n^{5} \phi(n) - 5 n^{4} \psi_{1}(n) + 7 n^{2} \psi_{3}(n) - 3 \psi_{5}(n)}{384} & \text{ if }n \equiv \pm 1 \pmod{4} \\
\frac{217 n^{5} \phi(n) - 4340 n^{4} \psi_{1}(n) + 1984 n^{2} \psi_{3}(n) - 672 \psi_{5}(n)}{83328} & \text{ if }n \equiv 2 \pmod{4}
\end{array}
\right.
\end{equation*}

\begin{equation*}
\tilde{S}_6(n) = \left\{
 \begin{array}{ll}
\frac{n \left(3 n^{5} \phi(n) + 42 n^{4} \psi_{1}(n) - 56 n^{2} \psi_{3}(n) + 32 \psi_{5}(n)\right)}{2688} & \text{ if }n \equiv 0 \pmod{4} \\
\frac{n \left(3 n^{5} \phi(n) - 21 n^{4} \psi_{1}(n) + 49 n^{2} \psi_{3}(n) - 31 \psi_{5}(n)\right)}{2688} & \text{ if }n \equiv \pm 1 \pmod{4} \\
\frac{n \left(93 n^{5} \phi(n) - 2604 n^{4} \psi_{1}(n) + 1984 n^{2} \psi_{3}(n) - 1024 \psi_{5}(n)\right)}{83328} & \text{ if }n \equiv 2 \pmod{4}
\end{array}
\right.
\end{equation*}

\section{Polynomial sums of totatives}
These formulas allow us to compute $\sum_{1\leq i<n, (i,n)=1} P(i)$ or $\sum_{i\in R_n} P(i)$ as a function of $n$ where $P$ is a polynomial. For instance, consider the sequence where $a(n)$ is defined as the sum of the areas of all rectangles with coprime integer length $l$ and width $w$ and perimeter $2n$. 
For $n=5$, the only such rectangles of semiperimeter $5$ are $1$ by $4$ and $2$ by $3$ and thus the total area is $a(5) = 1\times 4 + 2\times 3 = 10$.
Two rectangles are assumed the same after rotation, so we can assume that $l\leq w$ and thus $l\leq \frac{n}{2}$.
The area of such a rectangle is $lw = l(n-l) = ln-l^2$ and therefore this sequence can be written as $a(n) = \sum_{i\in R_n} ni-i^2 = n\tilde{S}_1(n)-\tilde{S}_2(n)$.
Using the formulas above, we obtain the following explicit formulas for this sequence (OEIS sequence \oeis{A334628}) when $n\neq 2$:
$$
a(n) = \frac{nS_1(n)-S_2(n)}{2}= \frac{n^2\phi(n)-n\psi_1(n)}{12}
$$
Note that even though $\tilde{S}_i$ depends on the residue of $n \pmod{4}$, this formula doesn't since for $n\neq 2$, we have $2a(n) = nS_1(n)-S_2(n)$ which does not depend on the residue of $n \pmod{4}$.

\section{Conclusions}
We study a functional equation and its relationship to Bernoulli numbers and Faulhaber's formula. 
We then show that the sum of powers of elements in the subset of totatives $R_n$ has a similar structural form as the sum of powers of all totatives and can be expressed as a linear combinations of $\psi_i$ for odd $i$ from $-1$ to $k$. We also show that for odd $k$, $\tilde{S}_k$ has one more term than $S_k$. For $k$ much smaller than $n$, these explicit formulas provide an efficient method to computed $\tilde{S}_k$. The corresponding sequences for $1\leq k\leq 4$ are given by OEIS sequences \oeis{A066840}, \oeis{A295574}, \oeis{A295575}, and \oeis{A295576}, respectively.

\end{document}